\newtheorem{Theorem}{Theorem}[section]
\newtheorem{Lemma}[Theorem]{Lemma}
\newtheorem{Definition}[Theorem]{Definition}
\newtheorem{Corollary}[Theorem]{Corollary}
\newtheorem{Proposition}[Theorem]{Proposition}
\newtheorem{Example}[Theorem]{Example}
\newtheorem{Remark}[Theorem]{Remark}
\newtheorem{Problem}[Theorem]{Problem}
\date{version of \today}
\title{Uniformly column sign-coherence and the existence \\of  maximal green sequences}
\author{Peigen Cao $\;\;\;\;\;\;$ Fang Li $\;\;\;\;\;\;$}
\address{Peigen Cao
\newline Department
of Mathematics, Zhejiang University (Yuquan Campus), Hangzhou, Zhejiang
310027,  P.R.China}
\email{peigencao@126.com}
\address{Fang Li
\newline Department
of Mathematics, Zhejiang University (Yuquan Campus), Hangzhou, Zhejiang
310027, P.R.China}
\email{fangli@zju.edu.cn}
\begin{document}
%\begin{CJK*}{GBK}{song}
\renewcommand{\thefootnote}{\alph{footnote}}

\setcounter{footnote}{-1} \footnote{\emph{ Mathematics Subject
Classification(2010)}:  13F60, 05E40}
\renewcommand{\thefootnote}{\alph{footnote}}
\setcounter{footnote}{-1} \footnote{ \emph{Keywords}: cluster algebra, sign-coherence, maximal green sequence, green-to-red sequence.}

\begin{abstract}
 In this paper, we prove that each matrix in $M_{m\times n}(\mathbb Z_{\geq0})$ is uniformly column sign-coherent (Definition \ref{def1} (ii)) with respect to any $n\times n$ skew-symmetrizable integer matrix (Corollary \ref{cor1} (ii)). Using such matrices, we introduce the definition of irreducible skew-symmatrizable matrix (Definition \ref{defind}).  Based on this, the existence of a maximal green
sequence for a skew-symmetrizable matrices is  reduced to the existence of a maximal green sequence for irreducible skew-symmetrizable matrices.
\end{abstract}

\maketitle
\bigskip

\section{introduction}
{\em C-matrices} (respectively, {\em G-matrices}) \cite{FZ3} are important research objects in the theory of cluster algebras. It is known that {\em C-matrices} (respectively, {\em G-matrices}) are column (respectively, row) sign-coherent (see Definition \ref{def1} (i)). In this paper, we consider the matrices which have the similar property  with {\em C-matrices}. This property is called uniformly column sign-coherence (see Definition \ref{def1} (ii)). By the definition of uniformly column sign-coherence and a result in \cite{GHKK} (see Theorem \ref{fterm1} below), we know that $I_n$ is uniformly column sign-coherent using  the terminology in this paper.

 The motivation to consider the uniformly column sign-coherence comes from Proposition \ref{pro1}, which indicates if some submatrix of a skew-symmetrizable $B$ is uniformly column sign-coherent, then there is another submatrix of $B$ is invariant under any particular sequence of mutations.

 It is natural to ask that when a matrix is uniformly column sign-coherent. This is actually a hard question. However, we can turn our mind to the other side to think about how to produce new uniformly column sign-coherent matrices from a given one. Theorem \ref{thm1} in this paper is an answer to this. As a corollary, matrices in  $M_{m\times n}(\mathbb Z_{\geq0})$ are proved to be uniformly column sign-coherence (Corollary \ref{cor1}).

 Maximal green sequences are particular sequences of mutations of
skew-symmetrizable matrices  introduced by Keller \cite{K}. Such particular sequences have numerous applications, including the
computations of spectrums of BPS states, Donaldson-Thomas invariants, tilting of hearts in derived categories,
and quantum dilogarithm identities.

A very important problem in cluster algebra theory is to determine when a given skew-symmetrizable matrix $B$ has a maximal green sequence. In \cite{M} (Theorem 9), Greg Muller proved that if $B$ has a maximal green sequence, so is any principal submatix of $B$. Conversely, if some principal submatrices of $B$ have a maximal green sequence, how about the existence of maximal green sequence of $B$? An answer to this question is given in this paper, based on the discussion of uniformly column sign-coherence. One can refer to Theorem \ref{thm2} for this.

 Thanks to Theorem \ref{thm2} in this paper, and Theorem 9 of \cite{M}, we reduce the existence of a maximal green
sequence for skew-symmetrizable matrices to the existence of a maximal green sequence for irreducible skew-symmetrizable matrices (Definition \ref{defind}). And we give a characteristic for irreducible skew-symmetrizable matrices (Proposition \ref{procycle}).

Note that a special case of Theorem \ref{thm2} has been given in \cite{GM}. The authors proved that if both quivers $Q_1$ and $Q_2$ have a maximal green sequences, then so is the quiver $Q$ which is a $t$-colored direct sum of quivers $Q_1$ and $Q_2$ (Theorem 3.12 of \cite{GM}). And the authors believe this result also holds for any direct sum of $Q_1$ and $Q_2$ (Remark 3.13 of \cite{GM}). Theorem  \ref{thm2} in this paper actually gives an affirm answer to this.

This paper is organized as follows: In Section 2, some basic definitions are given. In Section 3, we give a method to produce uniformly column sign-coherent matrices from a given one (Theorem \ref{thm1}). Thus we prove that each matrix in $M_{m\times n}(\mathbb Z_{\geq0})$ is uniformly column sign-coherent (Corollary \ref{cor1}).
In Section 4, we give the definition of irreducible skew-symmetrizable matrix and their characterization. Then we reduce the existence of a maximal green
sequence for skew-symmetrizable matrices to the existence of a maximal green sequence for irreducible skew-symmetrizable matrices.

\section{Preliminaries}

Recall that an integer matrix $B_{n\times n}=(b_{ij})$ is  called  {\bf skew-symmetrizable} if there is a positive integer diagonal matrix $S$ such that $SB$ is skew-symmetric, where $S$ is said to be the {\bf skew-symmetrizer} of $B$. In this case, we say that $B$ is $S$-skew-symmetrizable.  For an $(m+n)\times n$ integer matrix $\tilde B=(b_{ij})$, the square submatrix $B=(b_{ij})_{1\leq i,j\leq n}$ is called the {\bf principal part} of $\tilde B$. Abusing terminology, we say that $\tilde B$ itself is
skew-symmetrizable or  skew-symmetric if its principal part $B$ is so.

\begin{Definition}
Let $\tilde B_{(m+n)\times n}=(b_{ij})$ be $S$-skew-symmetrizable, the mutation of  $\tilde B$  in the direction $k\in\{1,2,\cdots,n\}$ is the $(m+n)\times n$ matrix $\mu_k(\tilde B)=(b_{ij}^{\prime})$, where
\begin{eqnarray}\label{bmutation}
b_{ij}^{\prime}=\begin{cases}-b_{ij}~,& i=k\text{ or } j=k;\\ b_{ij}+sgn(b_{ik})max(b_{ik}b_{kj},0),&otherwise.\end{cases}
\end{eqnarray}
\end{Definition}

It is easy to see that $\mu_k(\tilde B)$ is still $S$-skew-symmetrizable, and $\mu_k(\mu_k(\tilde B))=\tilde B$.

\begin{Definition}\label{def1}
(i)  For $m,n>0$, an $m\times n$ integer matrix $A$ is called {\bf column  sign-coherent} (respectively, {\bf row  sign-coherent}) if any two nonzero entries of $A$ in the same column (respectively, row) have the same sign.

(ii)  Let $B_{1}$ be an $n\times n$ skew-symmetrizable matrix, and  $B_{2}\in M_{m\times n}(\mathbb Z)$ be a  column sign-coherent matrix. $B_{2}$ is called {\bf uniformly column sign-coherent with respect to $B_{1}$} if for any sequence of mutations $\mu_{k_s}\cdots\mu_{k_2}\mu_{k_1}$, the lower $m\times n$ submatrix  of  $\mu_{k_s}\cdots\mu_{k_2}\mu_{k_1}\begin{pmatrix}B_{1}\\B_{2}\end{pmatrix}$ is column sign-coherent.
\end{Definition}

\begin{Remark}
Note that the uniformly column sign-coherence of $B_{2}$ is invariant up to permutation of its row vectors, by the  equality  (\ref{bmutation}).
\end{Remark}

Given an $S$-skew-symmetrizable matrix $\tilde B=\begin{pmatrix} B\\I_n\end{pmatrix}\in M_{2n\times n}(\mathbb Z)$, let $\tilde B_\sigma=\begin{pmatrix} B_\sigma\\C_\sigma\end{pmatrix}$ be  the matrix obtained from $\tilde B$ by a sequence of mutations $\sigma:=\mu_{k_s}\cdots\mu_{k_2}\mu_{k_1}$. Recall that the lower part $C_\sigma$ of $\tilde B_\sigma$ is called a {\bf $C$-matrix} of $B$, see \cite{FZ3}.
\begin{Theorem}(\cite{GHKK})\label{fterm1} Using the above notations, for the skew-symmetrizable matrix $B\in M_n(\mathbb Z)$,  each  $C$-matrix $C_\sigma$ of $B$ is column sign-coherent.
\end{Theorem}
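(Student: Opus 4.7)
The plan I would follow is the scattering-diagram approach of \cite{GHKK}, which handles the general skew-symmetrizable case uniformly; for the skew-symmetric case there is also the Derksen--Weyman--Zelevinsky categorification via decorated representations. The guiding idea is to realize each seed mutation-equivalent to the initial one as a maximal chamber in a consistent scattering diagram living in $M_{\mathbb R}\cong\mathbb R^n$, and to identify the columns of $C_\sigma$ with the primitive outward normal vectors of the walls bounding the chamber associated to $\sigma$. Sign-coherence then reduces to a positivity statement for the wall functions of the scattering diagram.

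First I would construct a consistent scattering diagram $\mathfrak D_B$ starting from the initial incoming walls $(e_i^\perp,\, 1+z^{e_i})$, $i=1,\dots,n$, encoded by the skew-symmetrizable form $B$. Using a Kontsevich--Soibelman-type completion, one iteratively adjoins outgoing walls $(\mathfrak d, f_{\mathfrak d})$ so that the path-ordered wall-crossing around every small loop composes to the identity. The technical heart is to prove, by induction on an order filtration, that every wall function has the positive form $\prod_j (1+z^{m_j})^{c_j}$ with $c_j\in\mathbb Z_{>0}$ and each $m_j$ lying in the nonnegative cone spanned by the $e_i$; this uses the positive shape of the mutation formula together with careful bookkeeping of the commutators introduced at each step.

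Next I would set up the cluster-theoretic dictionary: inside $\mathfrak D_B$ the cluster complex forms a simplicial fan whose maximal cones $\mathcal C_\sigma$ are in bijection with the seeds reachable by mutation sequences $\sigma=\mu_{k_s}\cdots\mu_{k_1}$, and each individual $\mu_{k_j}$ corresponds to crossing the wall separating two adjacent chambers. Under this dictionary the columns of $C_\sigma$ are exactly the primitive outward normal vectors of the walls bounding $\mathcal C_\sigma$. Since each such normal vector is attached to a wall whose function is positive in the sense above, it must itself lie in $\pm\mathbb Z_{\geq 0}^{n}$, which is precisely the column sign-coherence of $C_\sigma$.

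The main obstacle is the positivity of the wall functions throughout the completion procedure, together with the compatibility of the combinatorics of mutations with the wall-crossing structure; once these are established, the identification of the cluster complex inside $\mathfrak D_B$ makes sign-coherence of $C_\sigma$ an immediate consequence.
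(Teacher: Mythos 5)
The paper does not prove this statement at all; it is quoted directly from \cite{GHKK} as a known result, and your sketch is a faithful high-level outline of exactly the scattering-diagram argument used there (positivity of wall functions via the Kontsevich--Soibelman completion, identification of the cluster chambers, and reading off the columns of $C_\sigma$ as normal vectors of bounding walls). So your proposal matches the approach of the cited source, which is the only ``proof'' the paper relies on.
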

\begin{Remark}\label{rmk1}
 By Definition \ref{def1}, this theorem means  that $I_n$ is uniformly column sign-coherent with respect to the skew-symmetrizable matrix $B$.
\end{Remark}

Thanks to  Theorem \ref{fterm1}, one can define the sign functions on the column vectors of a {\em C-matrix} of a skew-symmetrizable matrix $B$.
 For a sequence of mutations $\sigma:=\mu_{k_s}\cdots\mu_{k_2}\mu_{k_1}$, denote by $\begin{pmatrix}B_{\sigma}\\ C_{\sigma}\end{pmatrix}:=\mu_{k_s}\cdots\mu_{k_2}\mu_{k_1}\begin{pmatrix}B\\I_n\end{pmatrix}$.
 If the entries of $j$-th column of $C_\sigma$ are all nonnegative (respectively, nonpositive), the sign of the $j$-th  column of $C_\sigma$ is defined as $\varepsilon_\sigma(j)=1$ (respectively, $\varepsilon_\sigma(j)=-1$).

\begin{Definition}
Let $C_\sigma$ be the C-matrix of $B$ given by a sequence of mutations $\sigma$, a column index $j\in \{1,\cdots,n\}$ of $C_\sigma$ is called {\bf green} (respectively, {\bf red}) if $\varepsilon_\sigma(j)=1$ (respectively, $\varepsilon_\sigma(j)=-1$).
\end{Definition}

Note that, by Theorem \ref{fterm1}, the column index of a {\em C-matrix} $C_\sigma$ is either green or red.

\begin{Definition} Let $B$ be a skew-symmetrizable matrix, and ${\bf k}=(k_1,\cdots,k_s)$ be a sequence of  column index of $B$. Denote by $C_{\sigma_j}$ the {C-matrix} of $B$ given by $\sigma_j:=\mu_{k_j}\cdots\mu_{k_2}\mu_{k_1}$.

(i)  ${\bf k}=(k_1,\cdots,k_s)$ is called a {\bf green-to-red sequence} of $B$  if each column index of the C-matrix $C_{\sigma_s}$ is red, i.e., $C_{\sigma_s}\in M_{n\times n}(\mathbb Z_{\leq 0})$.

(ii) ${\bf k}=(k_1,\cdots,k_s)$ is called a {\bf green sequence} of $B$ if  $k_i$ is green in the C-matrix $C_{\sigma_{i-1}}$ for $i=2,3,\cdots,s$.

(iii)  ${\bf k}=(k_1,\cdots,k_s)$ is called {\bf maximal green sequence}  of $B$  if it is both a green sequence  and a green-to-red sequence of $B$.

\end{Definition}

\begin{Example}\label{example1}
 Let $B=\begin{pmatrix}0&1&-1\\-1&0&1\\1&-1&0 \end{pmatrix}$, and ${\bf k}=(2,3,1,2)$.
\begin{eqnarray}
\begin{pmatrix} 0&1&-1\\-1&0&1\\1&-1&0\\  \hdashline[2pt/2pt] 1&0&0\\0&1&0\\0&0&1\end{pmatrix}\xrightarrow{\mu_2}\begin{pmatrix} 0&-1&0\\1&0&-1\\0&1&0\\ \hdashline[2pt/2pt] 1&0&0\\0&-1&1\\0&0&1\end{pmatrix}\xrightarrow{\mu_3}\begin{pmatrix}0&-1&0\\1&0&1\\0&-1&0\\  \hdashline[2pt/2pt] 1&0&0\\0&0&-1\\0&1&-1\end{pmatrix}
\xrightarrow{\mu_1}\begin{pmatrix}0&1&0\\-1&0&1\\0&-1&0\\  \hdashline[2pt/2pt] -1&0&0\\0&0&-1\\0&1&-1\end{pmatrix}
\xrightarrow{\mu_2}\begin{pmatrix}0&-1&1\\1&0&-1\\-1&1&0\\  \hdashline[2pt/2pt] -1&0&0\\0&0&-1\\0&-1&0\end{pmatrix}.\nonumber
\end{eqnarray}
Hence, ${\bf k}=(2,3,1,2)$ is a maximal green sequence of $B$.
\end{Example}

\section{Uniformly column sign-coherence of $B_2$}
In this section, we give a method to produce uniformly column sign-coherent matrices from a known one (Theorem \ref{thm1}). Then it is shown that all non-negative matrices and rank $\leq 1$ column sign-coherent matrices are uniform column sign-coherent (Corollary \ref{cor1} and Corollary \ref{cor2}).

\begin{Lemma}\label{uniformlem}
Let $P=(p_{ij})\in M_{p\times m}(\mathbb Z_{\geq0}),p,m>0$, and $B_1$ be an $n\times n$ skew-symmetrizable matrix. If $B_2\in M_{m\times n}(\mathbb Z)$ is column sign-coherent, then for $1\leq k\leq n$,
$$\mu_k(\begin{pmatrix}I_n&0\\0&P\end{pmatrix}\begin{pmatrix}B_1\\ B_2\end{pmatrix})=\mu_k\begin{pmatrix}B_1\\ PB_2\end{pmatrix}=\begin{pmatrix}I_n&0\\0&P\end{pmatrix}\mu_k\begin{pmatrix}B_1\\B_2 \end{pmatrix}.$$
\end{Lemma}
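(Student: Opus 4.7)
The first equality is immediate from block matrix multiplication, since $\begin{pmatrix}I_n & 0\\ 0 & P\end{pmatrix}\begin{pmatrix}B_1\\ B_2\end{pmatrix} = \begin{pmatrix}B_1\\ PB_2\end{pmatrix}$. So the substance of the lemma is the second equality, and the plan is to verify it entry by entry in $\mu_k\begin{pmatrix}B_1\\ PB_2\end{pmatrix}$ versus $\begin{pmatrix}I_n & 0\\ 0 & P\end{pmatrix}\mu_k\begin{pmatrix}B_1\\ B_2\end{pmatrix}$.

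First, since $k\leq n$, the mutation formula for the top $n$ rows of either of the two stacked matrices only uses entries of $B_1$ (namely $b_{ij}$, $b_{ik}$, $b_{kj}$ with $i,k\leq n$), so both sides agree in the top block and equal $\mu_k(B_1)$. Then I would handle the bottom $p$ rows by comparing row $n+l$ on each side. Writing $B_2=(c_{sj})$ and using the positive-part formulation
\[
b'_{ij}=b_{ij}+[b_{ik}]_+[b_{kj}]_+-[-b_{ik}]_+[-b_{kj}]_+\qquad (i\neq k,\, j\neq k),
\]
the column $j=k$ case reduces on both sides to negating the entry $\sum_s p_{ls}c_{sk}$ and is immediate, while the case $j\neq k$ reduces, after expanding and cancelling the common linear term $\sum_s p_{ls}c_{sj}$, to the identity
\[
\Bigl[\sum_s p_{ls}c_{sk}\Bigr]_+ [b_{kj}]_+ - \Bigl[-\sum_s p_{ls}c_{sk}\Bigr]_+[-b_{kj}]_+=\sum_s p_{ls}\bigl([c_{sk}]_+[b_{kj}]_+-[-c_{sk}]_+[-b_{kj}]_+\bigr).
\]

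It therefore suffices to show the two scalar identities
\[
\Bigl[\sum_s p_{ls}c_{sk}\Bigr]_+=\sum_s p_{ls}[c_{sk}]_+,\qquad \Bigl[-\sum_s p_{ls}c_{sk}\Bigr]_+=\sum_s p_{ls}[-c_{sk}]_+.
\]
This is the only place where the hypotheses actively enter, and it is the key step. Since $B_2$ is column sign-coherent, all nonzero $c_{sk}$ for $s=1,\dots,m$ share a common sign, and since $p_{ls}\geq 0$, the sum $\sum_s p_{ls}c_{sk}$ carries that same sign. A short case split on whether column $k$ of $B_2$ is nonnegative or nonpositive then makes both identities obvious: on the nonnegative side, $[c_{sk}]_+=c_{sk}$ and $[-c_{sk}]_+=0$, while $\sum_s p_{ls}c_{sk}\geq 0$; the nonpositive case is symmetric.

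The main obstacle is really just this last step: the positive-part function fails to distribute over arbitrary linear combinations, and it is precisely column sign-coherence of $B_2$ combined with $P\in M_{p\times m}(\mathbb Z_{\geq 0})$ that rescues the distributivity. Once this is in place, the entrywise matching above is a direct bookkeeping check.
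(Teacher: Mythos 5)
Your proof is correct and follows essentially the same route as the paper's: both reduce to an entrywise check on the lower block and hinge on the fact that the sign/positive-part term in the mutation rule distributes over the sum $\sum_s p_{ls}c_{sk}$ precisely because column sign-coherence of $B_2$ and nonnegativity of $P$ force all summands to share a sign. Your positive-part notation $[\cdot]_+$ is just a cosmetic variant of the paper's $\mathrm{sgn}(\cdot)\max(\cdot,0)$ computation.
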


\begin{proof}
Denote by $\begin{pmatrix}B_1\\ B_2\end{pmatrix}=(b_{ij})$, $\mu_k\begin{pmatrix}B_1\\ B_2\end{pmatrix}=(b_{ij}^{\prime})$, $\begin{pmatrix}B_1\\ PB_2\end{pmatrix}=(a_{ij})$, $\mu_k\begin{pmatrix}B_1\\ PB_2\end{pmatrix}=(a_{ij}^{\prime})$.
Clearly, the principal parts of  $\mu_k\begin{pmatrix}B_1\\ PB_2\end{pmatrix}$ and $\begin{pmatrix}I_n&0\\0&P\end{pmatrix}\mu_k\begin{pmatrix}B_1\\B_2 \end{pmatrix}$ are equal. It suffices to show the lower parts of $\mu_k\begin{pmatrix}B_1\\ PB_2\end{pmatrix}$ and $\begin{pmatrix}I_n&0\\0&P\end{pmatrix}\mu_k\begin{pmatrix}B_1\\B_2 \end{pmatrix}$ are equal.
We know that for  $i>n, a_{ij}=\sum\limits_{l=1}^mp_{il}b_{n+l,j}$. By equation (\ref{bmutation}), for $i>n$,
$$a_{ij}^{\prime}=a_{ij}+sgn(a_{ik})max(a_{ik}b_{kj},0)
=\sum\limits_{l=1}^mp_{il}b_{n+l,j}+sgn(\sum\limits_{l=1}^mp_{il}b_{n+l,k})max(\sum\limits_{l=1}^mp_{il}b_{n+l,k}b_{kj},0).$$

Because $B_2$ is column sign-coherent and $P\in M_{p\times m}(\mathbb Z_{\geq0})$, we know that $(p_{il_1}b_{n+l_1,k})(p_{il_2}b_{n+l_2,k})\geq0, 1\leq l_1,l_2\leq m$. Thus if $p_{il_1}b_{n+l_1,k}\neq 0$, then $sgn(p_{il_1}b_{n+l_1,k})=sgn(\sum\limits_{l=1}^mp_{il}b_{n+l,k})$.
So
\begin{eqnarray}
a_{ij}^{\prime}&=&\sum\limits_{l=1}^mp_{il}b_{n+l,j}+sgn(\sum\limits_{l=1}^mp_{il}b_{n+l,k})max(\sum\limits_{l=1}^mp_{il}b_{n+l,k}b_{kj},0)\nonumber\\
&=&\sum\limits_{l=1}^mp_{il}b_{n+l,j}+\sum\limits_{l=1}^m sgn(p_{il}b_{n+l,k})max(p_{il}b_{n+l,k}b_{kj},0)\nonumber\\
&=&\sum\limits_{l=1}^mp_{il}(b_{n+l,j}+sgn(b_{n+l,k})max(b_{n+l,k}b_{kj},0)),\nonumber\\
&=&\sum\limits_{l=1}^mp_{il}b_{n+l,j}^{\prime}.\nonumber
\end{eqnarray}
Then the result follows.
\end{proof}

\begin{Theorem}\label{thm1}
Let $P\in M_{p\times m}(\mathbb Z_{\geq0})$ for $p,m>0$, and $B_1$ be an $n\times n$ skew-symmetrizable matrix. If $B_2\in M_{m\times n}(\mathbb Z)$ is uniformly  column sign-coherent with respect to $B_1$, then so is $PB_2$.
\end{Theorem}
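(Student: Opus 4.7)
The plan is to reduce the claim to an iterated application of Lemma \ref{uniformlem}, using the uniform column sign-coherence of $B_2$ to guarantee that the hypothesis of Lemma \ref{uniformlem} is met at every intermediate step.

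First I would check that $PB_2$ is itself column sign-coherent: each entry in the $j$-th column of $PB_2$ is $\sum_{l=1}^m p_{il} b_{n+l,j}$, a non-negative combination of entries in the $j$-th column of $B_2$. Since $B_2$ is column sign-coherent and $p_{il} \geq 0$, these combinations lie in a single half-line, so $PB_2$ is column sign-coherent. This handles the base case (the empty mutation sequence) of the induction to come.

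Now fix any sequence $\sigma = \mu_{k_s}\cdots\mu_{k_1}$ of mutations. I would prove by induction on $t = 0, 1, \ldots, s$ the identity
\begin{equation*}
\mu_{k_t}\cdots\mu_{k_1}\begin{pmatrix}B_1\\ PB_2\end{pmatrix} \;=\; \begin{pmatrix}I_n & 0\\ 0 & P\end{pmatrix}\,\mu_{k_t}\cdots\mu_{k_1}\begin{pmatrix}B_1\\ B_2\end{pmatrix}.
\end{equation*}
The case $t=0$ is trivial. For the inductive step, write $\sigma_{t-1} = \mu_{k_{t-1}}\cdots\mu_{k_1}$ and let $B_2^{(t-1)}$ be the lower $m\times n$ block of $\sigma_{t-1}\begin{pmatrix}B_1\\B_2\end{pmatrix}$. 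By the hypothesis that $B_2$ is uniformly column sign-coherent with respect to $B_1$, the matrix $B_2^{(t-1)}$ is column sign-coherent. Hence Lemma \ref{uniformlem} applies (with $B_2^{(t-1)}$ in place of $B_2$) and gives
\begin{equation*}
\mu_{k_t}\Bigl(\begin{pmatrix}I_n&0\\0&P\end{pmatrix}\sigma_{t-1}\begin{pmatrix}B_1\\B_2\end{pmatrix}\Bigr) = \begin{pmatrix}I_n&0\\0&P\end{pmatrix}\mu_{k_t}\sigma_{t-1}\begin{pmatrix}B_1\\B_2\end{pmatrix},
\end{equation*}
which combined with the inductive hypothesis yields the identity at step $t$.

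Taking $t=s$, the lower $p\times n$ block of $\sigma\begin{pmatrix}B_1\\PB_2\end{pmatrix}$ equals $P\cdot B_2^{(s)}$, where $B_2^{(s)}$ is column sign-coherent by the uniform column sign-coherence of $B_2$. As in the first step, multiplying a column sign-coherent matrix on the left by a matrix with non-negative entries preserves column sign-coherence, so this lower block is column sign-coherent. Since $\sigma$ was arbitrary, $PB_2$ is uniformly column sign-coherent with respect to $B_1$.

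The only delicate point is the inductive step, where one must be sure that Lemma \ref{uniformlem} is legitimately applicable at stage $t$; this is precisely what the hypothesis of uniform column sign-coherence of $B_2$ buys us, and is the reason one cannot prove the analogous statement from mere column sign-coherence of $B_2$.
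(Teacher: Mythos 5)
Your proof is correct and follows essentially the same route as the paper: both reduce the statement to Lemma \ref{uniformlem} by commuting $\begin{pmatrix}I_n&0\\0&P\end{pmatrix}$ past the mutation sequence. Your explicit induction on $t$, using uniform column sign-coherence of $B_2$ to verify the lemma's hypothesis at each intermediate stage, merely spells out a step the paper's one-line invocation of the lemma leaves implicit.
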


\begin{proof}
For any sequence of mutation $\mu_{k_s}\cdots\mu_{k_2}\mu_{k_1}$,
the lower part of $\mu_{k_s}\cdots\mu_{k_2}\mu_{k_1}\begin{pmatrix}B_1\\B_2 \end{pmatrix}$ is column sign-coherent, by the uniformly  column sign-coherence of $B_2$ with respect to $B_1$.
Clearly, the lower part of $\begin{pmatrix}I_n&0\\0&P\end{pmatrix}\mu_{k_s}\cdots\mu_{k_2}\mu_{k_1}\begin{pmatrix}B_1\\B_2 \end{pmatrix}$ is also column sign-coherent.
By Lemma \ref{uniformlem}, we have
$$\mu_{k_s}\cdots\mu_{k_2}\mu_{k_1}(\begin{pmatrix}I_n&0\\0&P\end{pmatrix}\begin{pmatrix}B_1\\ B_2\end{pmatrix})=\begin{pmatrix}I_n&0\\0&P\end{pmatrix}\mu_{k_s}\cdots\mu_{k_2}\mu_{k_1}\begin{pmatrix}B_1\\B_2 \end{pmatrix}.$$
So the lower part of $\mu_{k_s}\cdots\mu_{k_2}\mu_{k_1}(\begin{pmatrix}I_n&0\\0&P\end{pmatrix}\begin{pmatrix}B_1\\ B_2\end{pmatrix})$ is also column sign-coherent.
Thus $PB_2$ is uniformly  column sign-coherent with respect to $B_1$.
\end{proof}

\begin{Corollary}\label{cor1}
 Let $B_1$ be an $n\times n$ skew-symmetrizable matrix. Then any matrix $P\in M_{m\times n}(\mathbb Z_{\geq0})$ is uniformly  column sign-coherent with respect to $B_1$.
\end{Corollary}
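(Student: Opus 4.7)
The plan is to obtain this as a direct consequence of Theorem~\ref{thm1} by choosing the right ``seed'' uniformly column sign-coherent matrix. The natural candidate is $I_n$, since Theorem~\ref{fterm1} together with Remark~\ref{rmk1} gives us, for free, that $I_n$ is uniformly column sign-coherent with respect to any $n\times n$ skew-symmetrizable matrix $B_1$. This is precisely the deep input of the argument.

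Concretely, given $P\in M_{m\times n}(\mathbb Z_{\geq 0})$, I take $B_2:=I_n\in M_{n\times n}(\mathbb Z)$ in Theorem~\ref{thm1}. With these dimensions, $B_2$ is uniformly column sign-coherent with respect to $B_1$ by the preceding remark, and $P$ is a non-negative matrix of the correct size ($p\times m$ in the statement of Theorem~\ref{thm1} becomes $m\times n$ here). Applying Theorem~\ref{thm1} then produces a new matrix, namely $P\cdot I_n = P$, which is uniformly column sign-coherent with respect to $B_1$. That is the conclusion.

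There is essentially no obstacle once Theorem~\ref{thm1} is in place; the only thing to double-check is the bookkeeping of dimensions in the substitution (matching the $(p,m,n)$ of Theorem~\ref{thm1} with the $(m,n,n)$ of the current statement) and to explicitly invoke Remark~\ref{rmk1} to certify that the identity matrix qualifies as a uniformly column sign-coherent seed. No further computations or case analysis are required.
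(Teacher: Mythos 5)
Your proof is correct and is exactly the paper's argument: write $P = P\cdot I_n$, use Remark \ref{rmk1} (i.e.\ Theorem \ref{fterm1}) to certify $I_n$ as the uniformly column sign-coherent seed, and apply Theorem \ref{thm1}. The dimension bookkeeping you note is also handled correctly.
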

\begin{proof}
By Remark \ref{rmk1}, $I_n$ is uniformly  column sign-coherent with respect to $B_1$. Then the result follows from Theorem \ref{thm1} since $P=PI_n$.
\end{proof}

\begin{Corollary}\label{cor2}
Let $B_1$ be an $n\times n$ skew-symmetrizable matrix, and $B_2$ be an $m\times n$  column sign-coherent integer matrix. If $rank(B_2)\leq 1$, then $B_2$ is  uniformly column sign-coherent with respect to $B_1$.
\end{Corollary}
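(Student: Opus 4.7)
The plan is to reduce to Theorem~\ref{thm1} by exhibiting a factorization $B_2 = PB_2'$ with $P \in M_{m\times 1}(\mathbb Z_{\geq 0})$ and $B_2' \in M_{1\times n}(\mathbb Z)$. The key observation making this worthwhile is that any $1\times n$ integer matrix is trivially column sign-coherent (each column has only one entry), and mutation of a matrix $\begin{pmatrix}B_1\\ B_2'\end{pmatrix}$ preserves the number of rows of $B_2'$; hence the lower part of $\mu_{k_s}\cdots\mu_{k_1}\begin{pmatrix}B_1\\ B_2'\end{pmatrix}$ is always a $1\times n$ block and thus column sign-coherent for free. This says precisely that $B_2'$ is uniformly column sign-coherent with respect to $B_1$, and Theorem~\ref{thm1} then promotes the same property to $PB_2' = B_2$.

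To produce the decomposition, I would first dispose of the trivial case $B_2 = 0$: the mutation formula (\ref{bmutation}) shows that a zero lower block stays zero, and $0$ is vacuously column sign-coherent. Otherwise $\mathrm{rank}(B_2) = 1$, so I pick any nonzero column of $B_2$, divide by the gcd of its entries to obtain a primitive vector $v \in \mathbb Z^m$, and express every column of $B_2$ as a rational multiple of $v$. Primitivity of $v$, combined with the fact that each column of $B_2$ is integral, forces every such rational coefficient to be an integer (if $c=p/q$ in lowest terms satisfies $cv_i\in\mathbb Z$ for all $i$, then $q$ divides $\gcd(v_i)=1$), giving $B_2 = v\,w^{T}$ for some $w \in \mathbb Z^n$.

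Finally, the column sign-coherence of $B_2$ together with $B_2 \neq 0$ forces all nonzero entries of $v$ to share a common sign; after possibly negating both $v$ and $w$ I may therefore take $v \in M_{m\times 1}(\mathbb Z_{\geq 0})$. Setting $P := v$ and $B_2' := w^{T}$ completes the reduction, and Theorem~\ref{thm1} applied to this $P$ and $B_2'$ yields the conclusion. The only mild obstacle in the whole argument is justifying this rank-$1$ integer decomposition with a nonnegative left factor $v$; primitivity together with the column sign-coherence of $B_2$ handles it cleanly, and no mutation-theoretic input is needed beyond Theorem~\ref{thm1} itself.
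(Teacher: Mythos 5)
Your proof is correct and follows essentially the same route as the paper: factor $B_2$ as a nonnegative column vector times a row vector, observe that a $1\times n$ matrix is trivially uniformly column sign-coherent with respect to $B_1$, and invoke Theorem~\ref{thm1}. In fact you are slightly more careful than the paper, which writes the factorization with rational coefficients $c_i\in\mathbb Q$ even though Theorem~\ref{thm1} requires $P\in M_{m\times 1}(\mathbb Z_{\geq0})$; your primitivity argument supplies the integer factorization that makes the application of the theorem literal.
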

\begin{proof}
Because $rank(B_2)\leq 1$, $B_2$ has the form of
$$B_2=\begin{pmatrix}c_1\\ \vdots\\ c_n\end{pmatrix}\alpha,$$
 where $\alpha$ is a row vector, $c_1, c_2,\cdots,c_m\in\mathbb Q$.
  Because  $B_2$ is column sign-coherent, we can assume that $c_1,c_2,\cdots,c_m\geq 0$.
  Clearly, $\alpha$ is  uniformly column sign-coherent with respect to $B_1$.
Then by Theorem \ref{thm1}, $B_2$ is  uniformly column sign-coherent with respect to $B_1$.
\end{proof}

Following these two corollaries, there is a natural problem about  uniformly column sign-coherent matrices.
\begin{Problem}
Give all matrices $B_2$, which are  uniformly column sign-coherent with respect to $B_1$.
\end{Problem}

\begin{Proposition}\label{pro1}
Let $B=\begin{pmatrix}{B_1}_{n\times n}&{B_{3}}_{n\times m}\\{B_2}_{m\times n}&{B_{4}}_{m\times m} \end{pmatrix}$ be a skew-symmetrizable matrix. Then $B_2$ is  uniformly column sign-coherent with respect to $B_1$ if and only if  $B_{4}$ is invariant under
 any sequence of mutations $\mu_{k_s}\cdots\mu_{k_2}\mu_{k_1}$ with $1\leq k_i\leq n, i=1,2,\cdots,s$.
\end{Proposition}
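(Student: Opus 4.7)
The strategy is to track how the four blocks of $B$ evolve under mutations $\mu_k$ in directions $k\leq n$ and to reduce invariance of $B_4$ to a column sign-coherence condition on $B_2$. First I would note that the update (\ref{bmutation}) applied to $B$ at $k\leq n$ involves only entries in column $k$, all of which lie in the top $(n+m)\times n$ block; consequently the top block of $\mu_k(B)$ coincides with $\mu_k\binom{B_1}{B_2}$. Iterating, for any sequence $\sigma=\mu_{k_s}\cdots\mu_{k_1}$ with all $k_i\leq n$ the lower-left block of $\sigma(B)$ is precisely the matrix $(B_2)_\sigma$ appearing in $\sigma\binom{B_1}{B_2}$, and the change of the $B_4$-block under one further mutation $\mu_k$ (with $k\leq n$) is, for $i,j>n$,
$$\bigl(\mu_k\sigma(B)\bigr)_{ij}-\bigl(\sigma(B)\bigr)_{ij}=\mathrm{sgn}\bigl((B_2)_{\sigma,\,i-n,\,k}\bigr)\max\bigl((B_2)_{\sigma,\,i-n,\,k}\,(B_3)_{\sigma,\,k,\,j-n},\,0\bigr).$$

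Next I would exploit the fact that mutation preserves $S$-skew-symmetrizability, which at every stage gives $s_k(B_3)_{\sigma,\,k,\,j'}=-s_{n+j'}(B_2)_{\sigma,\,j',\,k}$. Since $s_k,s_{n+j'}>0$, the entries $(B_3)_{\sigma,\,k,\,j'}$ and $(B_2)_{\sigma,\,j',\,k}$ are sign-opposite. Substituting into the display, the right-hand side vanishes for every $i$ and $j$ iff no two entries of column $k$ of $(B_2)_\sigma$ carry opposite signs, i.e.\ iff that column is sign-coherent. This equivalence --- \emph{$B_4$ is left unchanged by $\mu_k$ applied to $\sigma(B)$ iff column $k$ of $(B_2)_\sigma$ is sign-coherent} --- is the heart of the argument, and the sign identity extracted from skew-symmetrizability is essentially the only non-routine step.

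With the equivalence in hand both implications follow cleanly. For ($\Rightarrow$), I would induct on the length of $\sigma=\mu_{k_s}\cdots\mu_{k_1}$: uniform column sign-coherence of $B_2$ guarantees column $k_{j+1}$ of $(B_2)_{\sigma_j}$ is sign-coherent at each step, so every mutation leaves the $B_4$-block unchanged. For ($\Leftarrow$), I would argue contrapositively: if $B_2$ fails to be uniformly column sign-coherent, then some sequence $\sigma$ (possibly empty) and some column $l\leq n$ leave $(B_2)_\sigma$ with column $l$ not sign-coherent; invariance along $\sigma$ says the $B_4$-block of $\sigma(B)$ still equals $B_4$, but by the equivalence the additional mutation $\mu_l$ forces a change in that block, contradicting invariance along $\mu_l\sigma$.
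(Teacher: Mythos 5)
Your proposal is correct and follows essentially the same route as the paper: both reduce the statement to the single-mutation equivalence ``the $B_4$-block is unchanged by $\mu_k$ iff column $k$ of the current lower-left block is sign-coherent,'' proved by using the skew-symmetrizer to convert $b_{ik}b_{kj}\le 0$ into $b_{ik}b_{jk}\ge 0$, and then iterate over the mutation sequence. You simply spell out the induction and the identification of the lower-left block with the mutated $\binom{B_1}{B_2}$, which the paper leaves implicit in ``the result follows.''
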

\begin{proof}
Let $B=(b_{ij})$, and $\mu_k(B)=(b_{ij}^{\prime}), 1\leq k\leq n$. We know for any $i,j$, $$b_{ij}^{\prime}=b_{ij}+sgn(b_{ik})max(b_{ik}b_{kj},0). $$
Then $b_{ij}^{\prime}=b_{ij}$ if and only if $ b_{ik}b_{kj}\leq 0$, and then if and only if $ b_{ik}b_{jk}\geq 0$ because either  $b_{kj}b_{jk}<0$ or $b_{kj}=b_{jk}=0$ holds.

So, $B_{4}$ is invariant under the mutation $\mu_k(B)=(b_{ij}^{\prime}), 1\leq k\leq n$ if and only if $b_{ij}^{\prime}=b_{ij}$ for $n+1\leq i,j\leq n+m$, and then if and only if $ b_{ik}b_{jk}\geq 0$ for $n+1\leq i,j\leq n+m, 1\leq k\leq n$, which means that $B_2$ is  column sign-coherent. The result follows.
\end{proof}

\section{the existence of maximal green sequences}

Based on the discussion about uniformly column sign-coherence, in this section, we reduce the existence of maximal green sequences  for skew-symmetrizable matrices to the existence of maximal green sequences  for irreducible skew-symmetrizable matrices.

 \subsection{Irreducible skew-symmetrizable matrices}
 In this subsection, we give the definition of irreducible skew-symmetrizable matrices and their characteristic.

Let $B=(b_{ij})_{n\times n}$ be a  matrix, and $n_1, n_2$ be two positive integers. For $1\leq i_1<\cdots <i_{n_2}\leq n$ and $1\leq j_1<\cdots <j_{n_1}\leq n$, denote by $B_{j_1,\cdots,j_{n_1}}^{i_1,\cdots,i_{n_2}}$  the submatrix of $B$ with entries $b_{ij}$, where $i=i_1,\cdots,i_{n_2}$ and $j=j_1,\cdots,j_{n_1}$. If $n_2<n$ or $n_1<n$, the corresponding submatrix  $B_{j_1,\cdots,j_{n_1}}^{i_1,\cdots,i_{n_2}}$ is  a proper submatrix of $B$. If $n_2=n_1$ and $\{i_1,\cdots,i_{n_2}\}=\{j_1,\cdots,j_{n_1}\}$, the corresponding submatirx is a  principal submatirx of $B$.
Clearly, any principal submatrix of a skew-symmetrizable matrix is still skew-symmetrizable.

 \begin{Definition}\label{defind}
  A skew-symmetrizable matrix $B=(b_{ij})_{n\times n}$ is called {\bf reducible}, if $B$ has a proper submatrix $B_{j_1,\cdots,j_{n_1}}^{i_1,\cdots,i_{n_2}}$ satisfying£º

  (i). $B_{j_1,\cdots,j_{n_1}}^{i_1,\cdots,i_{n_2}}$ is a nonnegative matrix, i.e., $B_{j_1,\cdots,j_{n_1}}^{i_1,\cdots,i_{n_2}}\in M_{n_2\times n_1}(\mathbb Z_{\geq 0})$.

  (ii). $\{i_1,\cdots,i_{n_2}\}\cup \{j_1,\cdots,j_{n_1}\}=\{1,2,\cdots,n\}$ and $\{i_1,\cdots,i_{n_2}\}\cap \{j_1,\cdots,j_{n_1}\}=\phi$.

   Otherwise, $B$ is said to be {\bf irreducible} if such proper submatrix does not exist.

\end{Definition}
Clearly, $B$ is reducible if and only if  up to renumbering the row-column indexes of $B$, $B$ can be written as a block matrix as follows
$$B=\begin{pmatrix}{B_1}_{n_1\times n_1}&{B_{3}}_{n_1\times n_2}\\{B_2}_{n_2\times n_1}&{B_{4}}_{n_2\times n_2} \end{pmatrix},$$
such that the proper submatirx $B_2$ of $B$  is a nonnegative matrix, i.e.,  $B_2\in M_{n_2\times n_1}(\mathbb Z_{\geq 0})$.

In the skew-symmetric case, the definition of irreducibility for quiver version, has been given  in \cite{GM}.

For a skew-symmetrizable matrix $B$, we can encode the sign pattern of entries of $B$ by the quiver $\Gamma(B)$ with the vertices $1,2,\cdots,n$ and the arrows $i\rightarrow j$ for $b_{ij}>0$.  We call $\Gamma(B)$  the {\bf underlying quiver of $B$}. If  $\Gamma(B)$ is an acyclic quiver, then $B$ is said to be {\bf acyclic}. If $\Gamma(B)$ is a connected quiver, then  $B$ is said to be {\bf connected}. Clearly, if $B$ is an irreducible skew-symmetrizable matrix, then it must be connected.

For a quiver $Q$, if there exists a path from a vertex $a$ to a vertex $b$, then $a$ is said to be a {\bf predecessor} of $b$, and b is said to be a {\bf successor} of $a$.  For a vertex $a$ in $Q$, denote by $M(a)$, $N(a)$ the set of predecessors of $a$ and the set of successors of $a$ respectively.  Note that $a\in M(a)\cap N(a)$.

\begin{Proposition}\label{procycle}
Let $B=(b_{ij})_{n\times n}$ be a connected skew-symmetrizable matrix. Then $B$ is irreducible if and only if each  arrow of the  quiver $\Gamma(B)$ is in some oriented cycles.
\end{Proposition}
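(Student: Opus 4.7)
The plan is to translate ``reducible'' into a purely graph-theoretic condition on $\Gamma(B)$ and then prove each direction by contrapositive. Since $B$ is skew-symmetrizable with a positive diagonal skew-symmetrizer, the entries $b_{ab}$ and $b_{ba}$ always have opposite signs (or both vanish). So for a partition $\{1,\ldots,n\}=I\sqcup J$ with both parts nonempty, the submatrix with rows $I$ and columns $J$ is nonnegative if and only if, for every $a\in I$ and $b\in J$, there is no arrow $b\to a$ in $\Gamma(B)$. Combined with Definition~\ref{defind}, this gives the key reformulation: $B$ is reducible if and only if $\{1,\ldots,n\}$ admits a nontrivial partition $I\sqcup J$ with no arrow of $\Gamma(B)$ going from $J$ to $I$.

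For the ``only if'' direction I argue by contrapositive. Assume an arrow $i\to j$ of $\Gamma(B)$ lies in no oriented cycle; equivalently there is no directed path from $j$ back to $i$. Set $J:=N(j)$ (which contains $j$) and $I:=\{1,\ldots,n\}\setminus J$. Then $i\in I$, since otherwise a path from $j$ to $i$ composed with the arrow $i\to j$ would form a cycle. If some arrow $b\to a$ had $b\in J$ and $a\in I$, concatenating the path from $j$ to $b$ with $b\to a$ would force $a\in N(j)=J$, a contradiction. Hence no arrow goes from $J$ to $I$, so by the reformulation above, $B$ is reducible.

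For the ``if'' direction I again argue by contrapositive. Suppose $B$ is reducible with a partition $I\sqcup J$ having no arrow from $J$ to $I$. Since $B$ is connected, the underlying undirected graph of $\Gamma(B)$ is connected, so some arrow joins $I$ and $J$; by hypothesis this arrow goes from $I$ to $J$, say $a\to b$ with $a\in I$ and $b\in J$. Any oriented cycle containing $a\to b$ would include a directed path from $b$ back to $a$, and such a path would have to cross from $J$ to $I$ at some step, contradicting our hypothesis. Hence this arrow lies in no oriented cycle. The only delicate point in the whole proof is the sign translation at the start; both directions afterwards are short reachability arguments using the successor set $N(j)$ and the partition $I\sqcup J$.
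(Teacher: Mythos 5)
Your proof is correct and takes essentially the same approach as the paper's: both directions rest on the same translation of block-nonnegativity of a proper submatrix into the absence of arrows from one part of a partition $I\sqcup J$ to the other, and the key partition in the harder direction is the successor set $N(j)$ versus its complement, exactly as in the paper. The only differences are presentational --- you isolate the sign translation as an explicit reformulation up front and phrase both directions as contrapositives, while the paper does the sign reasoning inline.
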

\begin{proof}
Suppose that $B$ is reducible, then $B$ can be written as a block matrix
$$B=\begin{pmatrix}{B_1}_{n_1\times n_1}&{B_{3}}_{n_1\times n_2}\\{B_2}_{n_2\times n_1}&{B_{4}}_{n_2\times n_2} \end{pmatrix},$$
such that $B_2\in M_{n_2\times n_1}(\mathbb Z_{\geq 0})$,  up to renumbering the row-column indexes of $B$. Since $B$ is connected, $B_2$ can not be a zero matrix. So there exist $i>n_1, j\leq n_1$ such that $b_{ij}\neq 0$. In fact $b_{ij}>0$, since  $B_2\in M_{n_2\times n_1}(\mathbb Z_{\geq 0})$.
We know that the arrow $i\rightarrow j$ is not in any oriented cycles of  $\Gamma(B)$, because $B_2\in M_{n_2\times n_1}(\mathbb Z_{\geq 0})$.

Suppose that there exists an arrow $i\rightarrow j$ is not in any oriented cycles of  $\Gamma(B)$.
We know that $i$ can not be a successor of $j$, i.e., $i\notin N(j)$. Let $n_1$ be the number of elements of $N(j)$. Clearly, $1\leq n_1\leq n-1$. We can renumber the row-column indexes of $B$ such that the elements of $N(j)$ are indexed by $1,2,\cdots,n_1$. $B$ can be written as a block matrix
$$B=\begin{pmatrix}{B_1}&{B_{3}}\\{B_2}&{B_{4}} \end{pmatrix}.$$

We claim that $B_2\in M_{(n-n_1)\times n_1}(\mathbb Z_{\geq 0})$.
Otherwise, there exists $k_1>n_1$ and $k_2\leq n_1$, i.e., $k_1\notin N(j), k_2\in N(j)$ such that $b_{k_1k_2}<0$. Thus $k_1$ is a successor of $k_2$, so is a successor of $j$, by $k_2\in N(j)$. This contradicts $k_1\notin N(j)$. So  $B_2\in M_{(n-n_1)\times n_1}(\mathbb Z_{\geq 0})$ and  $B$ is reducible. The proof is finished.
\end{proof}
\begin{Example}
Let $B=\begin{pmatrix}0&1&-1\\-2&0&2\\2&-2&0   \end{pmatrix}$. It is a skew-symmetrizable matrix. The quiver $\Gamma(B)$ is
$$
\begin{array}{cc}
\xymatrix{& 1\ar[ld]&
\\
 2\ar[rr]& &3\ar[lu]
}
\end{array}
$$
Since any arrow of $\Gamma(B)$ is in an oriented cycle, $B$ is irreducible.
\end{Example}

 \subsection{Reduction of the existence maximal green sequences }
 In this subsection, we
reduce the existence of maximal green sequences for skew-symmetrizable matrices to the existence
of a maximal green sequences for irreducible skew-symmetrizable matrices.

\begin{Lemma}\label{lem1}
Let $B$ be a skew-symmetrizable matrix and  $\sigma_{s+1}:=(k_1,\cdots,k_{s+1})$ be a sequence of column indexes of $B$. Denote by
 $\tilde B_{\sigma_{i}}=\begin{pmatrix}B_{\sigma_{i}}\\C_{\sigma_{i}}\end{pmatrix}:=\mu_{k_{i}}\cdots\mu_{k_2}\mu_{k_1}\begin{pmatrix}B\\I_n\end{pmatrix}$, $i=1,\cdots,s+1$. If $k_{s+1}$ is a green column index of $C_{\sigma_s}$, then any green column index $j$ of $C_{\sigma_s}$, with  $j\neq k_{s+1}$,  must be green in   $C_{\sigma_{s+1}}$.
\end{Lemma}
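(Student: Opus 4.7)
The plan is to extract $C_{\sigma_{s+1}}$ from $C_{\sigma_s}$ via a single application of the mutation rule (\ref{bmutation}) and then check positivity column by column. Write $\tilde B_{\sigma_s}=(b_{ij})$ and $\tilde B_{\sigma_{s+1}}=\mu_{k_{s+1}}(\tilde B_{\sigma_s})=(b_{ij}')$, so that the entries of $C_{\sigma_s}$ are $b_{ij}$ for $i>n$ and the entries of $C_{\sigma_{s+1}}$ are $b_{ij}'$ for $i>n$. For an index $j\in\{1,\ldots,n\}$ with $j\neq k_{s+1}$ and $i>n$, formula (\ref{bmutation}) specializes to
$$b_{ij}'=b_{ij}+\mathrm{sgn}(b_{i,k_{s+1}})\max\!\bigl(b_{i,k_{s+1}}\,b_{k_{s+1},j},0\bigr),$$
where $b_{i,k_{s+1}}$ is an entry of $C_{\sigma_s}$ and $b_{k_{s+1},j}$ is an entry of the principal part $B_{\sigma_s}$.

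Next I would feed in the two hypotheses. Since $k_{s+1}$ is green in $C_{\sigma_s}$, all entries $b_{i,k_{s+1}}$ with $i>n$ are nonnegative, so $\mathrm{sgn}(b_{i,k_{s+1}})\in\{0,1\}$. Since $j$ is green in $C_{\sigma_s}$, $b_{ij}\ge 0$ for every $i>n$. Now split on the sign of the fixed scalar $b_{k_{s+1},j}$. If $b_{k_{s+1},j}\ge 0$, then $b_{i,k_{s+1}}b_{k_{s+1},j}\ge 0$, and the correction term $\mathrm{sgn}(b_{i,k_{s+1}})\max(b_{i,k_{s+1}}b_{k_{s+1},j},0)$ is $\ge 0$; hence $b_{ij}'\ge b_{ij}\ge 0$. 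If $b_{k_{s+1},j}<0$, then $b_{i,k_{s+1}}b_{k_{s+1},j}\le 0$, so the maximum with $0$ vanishes and $b_{ij}'=b_{ij}\ge 0$. In either case, every entry of the $j$-th column of $C_{\sigma_{s+1}}$ is nonnegative.

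Finally, Theorem \ref{fterm1} applied to $\sigma_{s+1}$ guarantees that each column of $C_{\sigma_{s+1}}$ is sign-coherent and (being a $C$-matrix column) nonzero, so nonnegativity in all entries forces $\varepsilon_{\sigma_{s+1}}(j)=1$, i.e.\ $j$ is green in $C_{\sigma_{s+1}}$, as desired.

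There is no real obstacle here: the lemma is a direct unraveling of the mutation rule, and the only thing to be careful about is keeping straight which entries come from the $C$-part (where the greenness hypotheses give sign information) and which come from the principal part $B_{\sigma_s}$ (whose sign is unknown and therefore has to be handled by the two-case split above). The column index $j=k_{s+1}$ is excluded in the statement because the mutated column is $-b_{i,k_{s+1}}$, which is nonpositive rather than nonnegative, so the conclusion would fail there.
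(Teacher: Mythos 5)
Your proposal is correct and follows the same route as the paper's proof: apply the mutation rule (\ref{bmutation}) to the rows $i>n$ and observe that greenness of $k_{s+1}$ makes the correction term $\mathrm{sgn}(b_{i,k_{s+1}})\max(b_{i,k_{s+1}}b_{k_{s+1},j},0)$ nonnegative, so $b_{ij}'\ge b_{ij}\ge 0$. Your case split on the sign of $b_{k_{s+1},j}$ and the final appeal to Theorem \ref{fterm1} are both harmless but unnecessary, since the correction term is manifestly nonnegative in all cases and nonnegativity of the column is exactly the definition of green.
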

\begin{proof}
It can be proved in the same with that of Lemma 2.16 of \cite{BDP}. For the convenience of readers, we give  the  proof here.

 Because $j$ and $k_{s+1}$ are green column indexes of $C_{\sigma_s}$, we know that $(C_{\sigma_s})_{ij}\geq 0$ and $(C_{\sigma_s})_{ik_{s+1}}\geq 0$.
 By the definition of mutation, we have
 \begin{eqnarray}
 (C_{\sigma_{s+1}})_{ij}&=&(C_{\sigma_s})_{ij}+
 sgn((C_{\sigma_s})_{ik_{s+1}})max((C_{\sigma_s})_{ik_{s+1}}(C_{\sigma_s})_{k_{s+1}j},0)\nonumber\\
 &\geq&(C_{\sigma_s})_{ij}\geq 0.\nonumber
 \end{eqnarray}
So, $j$ is green in   $C_{\sigma_{s+1}}$.
\end{proof}

\begin{Theorem}\label{thm2}
Let $B=\begin{pmatrix}{B_{1}}_{n\times n}&{B_{3}}_{n\times m}\\{B_{2}}_{m\times n}&{B_{4}}_{m\times m} \end{pmatrix}=(b_{ij})$ be a skew-symmetrizable matrix, and $\tilde {\bf k}$ be a sequence $\tilde {\bf k}=(k_1,\cdots,k_s,k_{s+1},\cdots,k_{s+p})$, with $1\leq k_i\leq n$, and $n+1\leq k_j\leq m+n$ for $i=1,\cdots,s$, and $j=s+1,\cdots,s+p$, and denote by ${\bf k}=(k_1,\cdots,k_s)$, ${\bf j}=(k_{s+1},\cdots,k_{s+p})$.
If $B_{2}$ is in $M_{m\times n}(\mathbb Z_{\geq 0})$, then $\tilde {\bf k}$ is a maximal green sequence of $B$ if and only if ${\bf k}=(k_1,\cdots,k_s)$ (respectively, ${\bf j}=(k_{s+1},\cdots,k_{s+p})$) is a maximal green sequence of $B_{1}$ (respectively, $B_{4}$).

\end{Theorem}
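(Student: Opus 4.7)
The plan is to track the framed matrix $\tilde B_i := \mu_{k_i}\cdots\mu_{k_1}\begin{pmatrix} B\\ I_{n+m}\end{pmatrix}=\begin{pmatrix}\hat B_i\\ C_i\end{pmatrix}$ through the two phases of $\tilde{\bf k}$, and to show that $C_i$ remains block-diagonal throughout, so that the green/red status of each mutation index in $C_{i-1}$ can be read off a column of $C^{B_1}$ or $C^{B_4}$.

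\smallskip

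For phase 1 (indices $\leq n$), Corollary~\ref{cor1} says that $B_2\in M_{m\times n}(\mathbb Z_{\geq 0})$ is uniformly column sign-coherent with respect to $B_1$; Proposition~\ref{pro1} then keeps the lower-right block of $\hat B_i$ equal to $B_4$ and makes the upper-left block $B_1^{(i)}:=\mu_{k_i}\cdots\mu_{k_1}(B_1)$. Iterating Lemma~\ref{uniformlem} with $P=B_2$ yields the tropical-duality identity $\tilde B_2^{(i)}=B_2\,C_i^{B_1}$, where $C_i^{B_1}$ is the ordinary $C$-matrix of $B_1$; by skew-symmetrizability one has $\tilde B_3^{(i)}=-S_1^{-1}(\tilde B_2^{(i)})^{T}S_2$, and since $B_2\geq 0$ the $k$-th row of $\tilde B_3^{(i)}$ is sign-coherent with sign opposite to the $k$-th column of $C_i^{B_1}$. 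A direct sign check in the $C$-matrix mutation formula (using column sign-coherence from Theorem~\ref{fterm1}) then shows the upper-right, lower-left, and lower-right blocks of $C_i$ remain $0$, $0$, $I_m$, so $C_i=\mathrm{diag}(C_i^{B_1},\,I_m)$ for $i\leq s$. For phase 2 (indices $>n$), I would apply the same machinery to the permuted matrix $B':=\begin{pmatrix} B_4 & \tilde B_2^{(s)}\\ \tilde B_3^{(s)} & B_1^{(s)}\end{pmatrix}$: once one knows $C_s^{B_1}\leq 0$, one has $\tilde B_2^{(s)}\leq 0$ and $\tilde B_3^{(s)}\geq 0$, so Corollary~\ref{cor1} and Proposition~\ref{pro1} applied to $B'$ leave $B_1^{(s)}$ invariant under every phase-2 mutation, and iterating Lemma~\ref{uniformlem} gives $\tilde B_3^{(s+i)}=\tilde B_3^{(s)}\,C_i^{B_4}$. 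A parallel sign check (now using that row $k$ of $\tilde B_2^{(s+i-1)}$ is sign-coherent with sign opposite to column $k$ of $C_{i-1}^{B_4}$) then yields $C_{s+i}=\mathrm{diag}(C_s^{B_1},\,C_i^{B_4})$ for $i\leq p$.

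\smallskip

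With the block-diagonal forms in hand, both directions fall out. For $(\Leftarrow)$: green-ness of ${\bf k}$ for $B_1$ makes each phase-1 $k_i$ green in $C_{i-1}$; the green-to-red property of ${\bf k}$ gives $C_s^{B_1}\leq 0$ and launches the phase-2 analysis; green-ness of ${\bf j}$ for $B_4$ makes each phase-2 $k_{s+i}$ green in $C_{s+i-1}$; and green-to-redness of ${\bf j}$ forces $C_{s+p}$ to be all red. For $(\Rightarrow)$: phase-1 green-ness of $\tilde{\bf k}$ directly gives that of ${\bf k}$ for $B_1$, and to upgrade this to green-to-red for $B_1$ I would argue by contradiction --- if some column $j_0\leq n$ of $C_s^{B_1}$ were green, column $j_0$ of $C_s$ would also be green, and since every phase-2 mutation index exceeds $n\geq j_0$, Lemma~\ref{lem1} would keep column $j_0$ green through all of phase 2, contradicting green-to-redness of $\tilde{\bf k}$ for $B$. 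With $C_s^{B_1}\leq 0$ secured, the phase-2 block form applies and transports the green and green-to-red properties of $\tilde{\bf k}|_{\{s+1,\ldots,s+p\}}$ back to the corresponding properties of ${\bf j}$ for $B_4$. The main obstacle, in my view, is establishing the phase-2 identity $\tilde B_3^{(s+i)}=\tilde B_3^{(s)}\,C_i^{B_4}$: this hinges on re-applying the phase-1 machinery to $B'$, a substitution that only becomes legitimate once green-to-redness of ${\bf k}$ for $B_1$ supplies the nonnegativity of $\tilde B_3^{(s)}$ needed to play the role of $B_2$.
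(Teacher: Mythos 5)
Your proposal is correct and follows essentially the same route as the paper: establish the block-diagonal form of the extended $C$-matrix in phase 1 via the uniform column sign-coherence of the nonnegative block $B_2$ (Corollary \ref{cor1}, Lemma \ref{uniformlem}, Proposition \ref{pro1}), use $C^{B_1}_s\leq 0$ plus skew-symmetrizability to get $B_3^{(s)}\geq 0$ and rerun the argument for phase 2, and use Lemma \ref{lem1} to propagate a hypothetical green column through phase 2 for the forward direction. The only difference is cosmetic: where you verify the vanishing of the off-diagonal $C$-blocks by an explicit sign computation through the duality $B_3^{(i)}=-S_1^{-1}(B_2^{(i)})^{T}S_2$, the paper gets the same invariance more quickly by applying Corollary \ref{cor1} to the stacked nonnegative matrix $\bigl(\begin{smallmatrix}B_2\\ I_n\\ 0\end{smallmatrix}\bigr)$ and invoking the argument of Proposition \ref{pro1} on the whole extended matrix.
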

\begin{proof}
 Let $\tilde B=\begin{pmatrix} {B_1}&{B_{3}}\\{B_2}&{B_{4}}\\ I_n&0\\0&I_m\end{pmatrix}$ and $B_{\sigma_i}=\mu_{k_i}\cdots \mu_{k_2}\mu_{k_1}(\tilde B), i=1,\cdots,s, s+1,\cdots, s+p$.
By $B_2\in M_{m\times n}(\mathbb Z_{\geq 0})$ and Corollary \ref{cor1}, we know that  $\begin{pmatrix}B_2\\I_n\\0\end{pmatrix}$ is uniformly column sign-coherent with respect to $B_1$.
By the same argument in Proposition \ref{pro1},
we know that the submatrix $\begin{pmatrix} {B_{4}}\\ 0\\I_m\end{pmatrix}$ of $\tilde B$ is invariant under the sequence of mutations $\mu_{k_s}\cdots\mu_{k_2}\mu_{k_1}, 1\leq k_i\leq n$ for $i=1,2,\cdots,s$. So
for $i\leq s$ the matrix $B_{\sigma_i}$ has the form of
\begin{eqnarray}\label{eqn1}
B_{\sigma_i}=\begin{pmatrix} {B_{1;\sigma_i}}&{B_{3;\sigma_i}}\\{B_{2;\sigma_i}}&{B_{4;\sigma_i}}\\ C_{1;\sigma_i}&0\\0&I_m\end{pmatrix}.
\end{eqnarray}

$``\Longleftarrow"$:
Because  ${\bf k}=(k_1,k_2,\cdots,k_s)$ is a maximal green sequence of $B_1$, we know that
 $C_{1;\sigma_s}\in M_{n\times n}(\mathbb Z_{\leq0})$. Thus  by the  uniformly column sign-coherence of $\begin{pmatrix}B_2\\I_n\\0\end{pmatrix}$  with respect to $B_1$, we know that $\begin{pmatrix} {B_{2;\sigma_s}}\\ C_{1;\sigma_s}\\0\end{pmatrix}\in M_{(2m+n)\times n}(\mathbb Z_{\leq0})$.
 By $B_{2;\sigma_s}\in M_{m\times n}(\mathbb Z_{\leq 0})$ and that the principal part of $B_{\sigma_s}$ is skew-symmetrizable, we can know $B_{3;\sigma_s}\in M_{n\times m}(\mathbb Z_{\geq0})$. Then by Corollary \ref{cor1}, we know that
$\begin{pmatrix} {B_{3;\sigma_s}}\\ 0\\I_m\end{pmatrix}\in M_{(2n+m)\times m}(\mathbb Z_{\geq0})$ is uniformly column sign-coherent with respect to $B_{4}$.
By the same argument in Proposition \ref{pro1} again, we know that the submatrix $\begin{pmatrix} {B_{1;\sigma_s}}\\ C_{1;\sigma_s}\\0\end{pmatrix}$ of $B_{\sigma_s}$ is invariant under the sequences of mutations $\mu_{k_{s+p}}\cdots\mu_{k_{s+2}}\mu_{k_{s+1}}(B_{\sigma_s}), n+1\leq k_i\leq n+m$ for $i=s+1,\cdots,s+p$.
So for $i\geq s+1$, the matrix $B_{\sigma_i}$ has the form of
$$B_{\sigma_i}=\begin{pmatrix} {B_{1;\sigma_s}}&{B_{3;\sigma_i}}\\{B_{2;\sigma_i}}&{B_{4;\sigma_i}}\\ C_{1;\sigma_s}&0\\0&C_{4;\sigma_i}\end{pmatrix}.$$
Because ${\bf j}=(j_1,j_2,\cdots,j_p)$ is a maximal green sequence of $B_{4}$, we know that $C_{4;\sigma_{s+p}}\in M_{m\times m}(\mathbb Z_{\leq 0})$.
Thus the lower part of $B_{\sigma_{s+p}}$ is $\begin{pmatrix}  C_{1;\sigma_s}&0\\0&C_{4;\sigma_{s+p}}\end{pmatrix}\in M_{(m+n)\times(m+n)}(\mathbb Z_{\leq 0})$.
It can be seen that  $\tilde {\bf k}=({\bf k},{\bf j})$ is a green sequence of $B$, so it is maximal.

$``\Longrightarrow"$£ºBy  (\ref{eqn1}),
$B_{\sigma_s}=\begin{pmatrix} {B_{1;\sigma_s}}&{B_{3;\sigma_s}}\\{B_{2;\sigma_s}}&{B_{4;\sigma_i}}\\ C_{1;\sigma_s}&0\\0&I_m\end{pmatrix}.$
Clearly, ${\bf k}=(k_1,\cdots,k_s)$  is a green sequence of $B_1$ and  ${\bf j}=(k_{s+1},\cdots,k_{s+p})$ is a maximal green sequence of $B_{4}$.

We claim that each $l\in\{1,2,\cdots,n\}$ is red in $C_{1;\sigma_s}$, i.e., $C_{1;\sigma_s}\in M_{n\times n}(\mathbb Z_{\leq0})$,  and thus ${\bf k}=(k_1,\cdots,k_s)$  is a maximal green sequence of $B_1$.
Otherwise, there will exist a $l_0\in\{1,2,\cdots,n\}$ which is green in  $C_{1;\sigma_s}$. Thus $l_0$ is green in $\begin{pmatrix} C_{1;\sigma_s}&0\\0&I_m\end{pmatrix}$ the lower part of $B_{\sigma_s}$.
By Lemma \ref{lem1} and $l_0\leq n<k_{s+i}, i=1,2,\cdots,p$, we know that $l_0$ will remain green  in  $\begin{pmatrix} C_{1;\sigma_{s+p}}&C_{3;\sigma_{s+p}}\\C_{2;\sigma_{s+p}}&C_{4;\sigma_{s+p}}\end{pmatrix}$ the lower part of $B_{\sigma_{s+p}}$.
It is impossible since $(k_1,\cdots,k_{s},k_{s+1},\cdots,k_{s+p})$ is a maximal green sequence of $B$.
\end{proof}

When $B$ is skew-symmetric and $B_2$ is a matrix over $\{0,1\}$, the above theorem has been actually given in Theorem 3.12 of \cite{GM}. The authors of \cite{GM} believed that the result also holds for $B_2\in M_{m\times n}(\mathbb Z_{\geq 0})$, but they did not have a proof. We in fact have given the proof for this in the skew-symmetrizable case.

\begin{Remark}\label{rmkqin}
Note that the $"\Longleftarrow"$ part of the proof of the above theorem also holds if we replace maximal green sequences with green-to-red sequences, and the proof is identical.  We are thankful to Fan Qin for pointing out this.
\end{Remark}
\begin{Example}\label{example2}
Let $B=\begin{pmatrix}0&-2\\3&0\end{pmatrix}$. Here $B_1=0=B_{4}$, $B_2=3\geq0$.   The column index set of $B_1$ is $\{1\}$ and  the column index set of $B_{4}$ is $\{2\}$. It is known that $(1)$ is a maximal green sequence of $B_1$ and $(2)$ is a maximal green sequence of $B_{4}$. Then by Theorem \ref{thm2}, $(1,2)$ is a maximal green sequence of $B$. Indeed,
$$\begin{pmatrix}0&-2\\3&0\\ \hdashline[2pt/2pt] 1&0\\0&1\end{pmatrix}\xrightarrow{\mu_1}\begin{pmatrix}0&2\\-3&0\\  \hdashline[2pt/2pt] -1&0\\0&1\end{pmatrix} \xrightarrow{\mu_2}\begin{pmatrix}0&-2\\3&0\\  \hdashline[2pt/2pt] -1&0\\0&-1\end{pmatrix}.$$

\end{Example}

\begin{Example}
 Let $B=\begin{array}{c@{\hspace{-5pt}}l}
 \left(
\begin{array}{ccc;{2pt/2pt}cccc;{2pt/2pt}cccc;{2pt/2pt}c}
 0&1&-1&-2&-2\\
 -1&0&1&0&-4\\
 1&-1&0&-3&0\\
   \hdashline[2pt/2pt]
2&0&3&0&-2  \\
1&2&0&1&0
   \end{array}
\right)=\begin{pmatrix}B_1&B_{3}\\B_2&B_{4}\end{pmatrix}
\end{array}$\;  where $B_1$ is of order $3\times 3$ and $B_4$ is of order $2\times 2$. Clearly, $B$ is skew-symmetrizable with skew-symmetrizer $S=diag\{1,1,1,1,2\}$ and $B_2\in M_{2\times 3}(\mathbb Z_{\geq 0})$. The column index set of $B_1$ is  $\{1,2,3\}$ and the column index set of $B_{4}$ is $\{4,5\}$. By Example \ref{example1} (respectively, Example \ref{example2}), $(2,3,1,2)$ (respectively, $(4,5)$) is a maximal green sequence of $B_1$ (respectively, $B_{4}$). Then by Theorem \ref{thm2}, $(2,3,1,2,4,5)$ is a maximal green sequence of $B$. Indeed,

\begin{small}
\begin{eqnarray}
\tilde B:=&&
\begin{array}{c@{\hspace{-5pt}}l}
 \left(
\begin{array}{ccc;{2pt/2pt}cccc;{2pt/2pt}cccc;{2pt/2pt}c}
 0&1&-1&-2&-2\\
 -1&0&1&0&-4\\
 1&-1&0&-3&0\\
   \hdashline[2pt/2pt]
2&0&3&0&-2  \\
1&2&0&1&0\\
 \hdashline[2pt/2pt]
 1&0&0&0&0\\
 0&1&0&0&0\\
 0&0&1&0&0\\
 \hdashline[2pt/2pt]
 0&0&0&1&0\\
 0&0&0&0&1
   \end{array}
\right)
\end{array}
 \xrightarrow{\mu_2}
\begin{array}{c@{\hspace{-5pt}}l}
 \left(
\begin{array}{ccc;{2pt/2pt}cccc;{2pt/2pt}cccc;{2pt/2pt}c}
0&-1&0&-2&-2\\
1&0&-1&0&4\\
0&1&0&-3&-4\\
 \hdashline[2pt/2pt]
2&0&3&0&-2\\
1&-2&2&1&0\\
 \hdashline[2pt/2pt]
1&0&0&0&0\\
0&-1&1&0&0\\
0&0&1&0&0\\
 \hdashline[2pt/2pt]
0&0&0&1&0\\
0&0&0&0&1
   \end{array}
\right)
\end{array}
\xrightarrow{\mu_3}
\begin{array}{c@{\hspace{-5pt}}l}
 \left(
\begin{array}{ccc;{2pt/2pt}cccc;{2pt/2pt}cccc;{2pt/2pt}c}
0&-1&0&-2&-2\\
1&0&1&-3&0\\
0&-1&0&3&4\\
\hdashline[2pt/2pt]
2&3&-3&0&-2\\
1&0&-2&1&0\\
\hdashline[2pt/2pt]
1&0&0&0&0\\
0&0&-1&0&0\\
0&1&-1&0&0\\
\hdashline[2pt/2pt]
0&0&0&1&0\\
0&0&0&0&1
   \end{array}
\right)
\end{array}
\xrightarrow{\mu_1}
\nonumber\\
&&
\begin{array}{c@{\hspace{-5pt}}l}
 \left(
\begin{array}{ccc;{2pt/2pt}cccc;{2pt/2pt}cccc;{2pt/2pt}c}
0&1&0&2&2\\
-1&0&1&-3&0\\
0&-1&0&3&4\\
\hdashline[2pt/2pt]
-2&3&-3&0&-2\\
-1&0&-2&1&0\\
\hdashline[2pt/2pt]
-1&0&0&0&0\\
0&0&-1&0&0\\
0&1&-1&0&0\\
\hdashline[2pt/2pt]
0&0&0&1&0\\
0&0&0&0&1
   \end{array}
\right)
\end{array}
\xrightarrow{\mu_2}
\begin{array}{c@{\hspace{-5pt}}l}
 \left(
\begin{array}{ccc;{2pt/2pt}cccc;{2pt/2pt}cccc;{2pt/2pt}c}
0&-1&1&2&2\\
1&0&-1&3&0\\
-1&1&0&0&4\\
\hdashline[2pt/2pt]
-2&-3&0&0&-2\\
-1&0&-2&1&0\\
\hdashline[2pt/2pt]
-1&0&0&0&0\\
0&0&-1&0&0\\
0&-1&0&0&0\\
\hdashline[2pt/2pt]
0&0&0&1&0\\
0&0&0&0&1
   \end{array}
\right)
\end{array}
\xrightarrow{\mu_4}
\begin{array}{c@{\hspace{-5pt}}l}
 \left(
\begin{array}{ccc;{2pt/2pt}cccc;{2pt/2pt}cccc;{2pt/2pt}c}
0&-1&1&-2&2\\
1&0&-1&-3&0\\
-1&1&0&0&4\\
\hdashline[2pt/2pt]
2&3&0&0&2\\
-1&0&-2&-1&0\\
\hdashline[2pt/2pt]
-1&0&0&0&0\\
0&0&-1&0&0\\
0&-1&0&0&0\\
\hdashline[2pt/2pt]
0&0&0&-1&0\\
0&0&0&0&1
   \end{array}
\right)
\end{array}
\xrightarrow{\mu_5}\nonumber\\
&&
%\begin{eqnarray}
%\begin{small}
\begin{array}{c@{\hspace{-5pt}}l}
 \left(
\begin{array}{ccc;{2pt/2pt}cccc;{2pt/2pt}cccc;{2pt/2pt}c}
0&-1&1&-2&-2\\
1&0&-1&-3&0\\
-1&1&0&0&-4\\
\hdashline[2pt/2pt]
2&3&0&0&-2\\
1&0&2&1&0\\
\hdashline[2pt/2pt]
-1&0&0&0&0\\
0&0&-1&0&0\\
0&-1&0&0&0\\
\hdashline[2pt/2pt]
0&0&0&-1&0\\
0&0&0&0&-1
   \end{array}
\right).
\end{array}\nonumber
\end{eqnarray}
\end{small}
\end{Example}

Denote by $\tilde B^{\prime}=\mu_2\mu_2\mu_3\mu_2(\tilde B)$. It is can be seen that the submatrix $\tilde B_{4,5}^{4,5,6,7,8,9,10}$ of $\tilde B$ is invariant along the mutation sequence $(2,3,1,2)$ and the submatrix $\tilde {B^{\prime}}_{1,2,3}^{1,2,3,6,7,8,9,10}$ of $\tilde B^{\prime}$ is invariant along the mutation sequence $(4,5)$.

The following lemma is the skew-symmetrizable version of Theorem 9  and Theorem 17 of \cite{M}.  Although these results in \cite{M} were verified for the situation of quivers, or say, in skew-symmetric case,  the method of their proofs in \cite{M} can be naturally extended to the skew-symmetrizable case.

\begin{Lemma}\label{subthm}
Let $B$ be a skew-symmetrizable matrix. If $B$ admits a maximal green sequence (respectively, green-to-red sequence), then any principal submatrix of $B$ also has a maximal green sequence (respectively, green-to-red sequence).
\end{Lemma}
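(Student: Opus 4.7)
The plan is to follow the proofs of Theorems 9 and 17 of \cite{M}, adapting them from the quiver (skew-symmetric) setting to the skew-symmetrizable one. Since the mutation rule (\ref{bmutation}) and the column sign-coherence of C-matrices (Theorem \ref{fterm1}) take identical form in both settings, the combinatorial arguments of \cite{M} transfer with only minor bookkeeping changes (the skew-symmetrizer $S$ travels unchanged through mutations). I treat the maximal green sequence case in detail; the green-to-red case is strictly easier and follows from a restriction of the same argument.

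Given a maximal green sequence $\mathbf{k} = (k_1, \ldots, k_s)$ of $B$ and a principal submatrix $B_J$ indexed by $J \subseteq \{1, \ldots, n\}$, I would construct a candidate maximal green sequence $\mathbf{k}'$ for $B_J$ by \emph{filtering} $\mathbf{k}$: scan left to right; skip each $k_i \notin J$; for each $k_i \in J$, check whether the $k_i$-th column of the current C-matrix of $B_J$ (produced by the prefix of $\mathbf{k}'$ already accumulated) is green --- if yes, append $k_i$, otherwise skip. By construction $\mathbf{k}'$ is a green sequence of $B_J$. The filtering step is essential: a naive subsequence can fail to be green, as the $A_2$ example with MGS $(1,2,1)$ and $J = \{1\}$ illustrates (the subsequence $(1,1)$ is not green, whereas the filtered sequence $(1)$ is a valid MGS of $B_J = (0)$).

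The remaining task --- and the heart of the proof --- is to show that the terminal C-matrix of $B_J$ under $\mathbf{k}'$ is all-red. I would establish this by tracking, inductively on the step index $i$ of $\mathbf{k}$, a comparison invariant between the C-matrix $C_{\sigma_i}$ of $B$ (after the first $i$ mutations of $\mathbf{k}$) and the current C-matrix of $B_J$ (after the mutations of $\mathbf{k}'$ accumulated through step $i$). Loosely, for each $j \in J$ the $j$-th column of the $B_J$-C-matrix is controlled in sign pattern by the $J$-row restriction of the $j$-th column of $C_{\sigma_i}$. Since $C_{\sigma_s} \in M_{n \times n}(\mathbb{Z}_{\leq 0})$ (as $\mathbf{k}$ is a MGS of $B$), the invariant forces the terminal $B_J$-C-matrix to also lie in $M_{|J| \times |J|}(\mathbb{Z}_{\leq 0})$, completing the proof that $\mathbf{k}'$ is a MGS of $B_J$.

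The main obstacle is verifying this invariant across three cases at each mutation step of $\mathbf{k}$: mutation at $k_i \notin J$, appended mutation at $k_i \in J$, and skipped mutation at $k_i \in J$. Each case is checked by unwinding the mutation formula (\ref{bmutation}) and applying Theorem \ref{fterm1} at the appropriate points. The skipped case is the most delicate, requiring that the $B$-side mutation at a column which is already red in the $B_J$-side cannot create new green columns there. This verification is parallel to that in \cite{M}, and the skew-symmetrizable setting introduces no conceptual obstacles beyond the quiver case. The green-to-red version needs no filtering at all --- one simply takes the $J$-subsequence of $\mathbf{k}$ and argues that it ends at an all-red C-matrix of $B_J$ via the same comparison invariant.
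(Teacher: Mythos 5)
First, a point of reference: the paper does not actually write out a proof of Lemma \ref{subthm}. It states the lemma and asserts that the proofs of Theorems 9 and 17 of \cite{M} carry over to the skew-symmetrizable case, the mutation rule and sign-coherence (Theorem \ref{fterm1}) being formally identical there. So your proposal and the paper are in the same position of ultimately leaning on \cite{M}. To your credit, your sketch correctly identifies that the naive $J$-subsequence of a maximal green sequence can fail to be green (your $A_2$ example is exactly right), that some filtering is therefore needed, and that the whole difficulty is concentrated in showing the terminal $C$-matrix of $B_J$ is all-red.

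As a standalone argument, however, the proposal has a gap precisely at that step. The ``comparison invariant'' is never stated precisely, and the most natural reading --- that the sign of column $j$ of the $B_J$-side $C$-matrix is controlled by the $J$-row restriction of column $j$ of $C_{\sigma_i}$ --- is false as an equality of sign patterns. Take $B=\begin{pmatrix}0&1\\-1&0\end{pmatrix}$ with maximal green sequence $(1,2,1)$ and $J=\{1\}$: the filtered sequence is $(1)$ and the terminal $B_J$-side $C$-matrix is $(-1)$, yet already after the second mutation of ${\bf k}$ the $(1,1)$-entry of $C_{\sigma_i}$ equals $0$ and remains $0$ at the end, so the restricted column carries no sign while the $B_J$-column is strictly negative. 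The invariant must therefore be a one-directional implication (e.g.\ if column $j$ of the $B_J$-side $C$-matrix is green, then the $J$-restriction of column $j$ of $C_{\sigma_i}$ contains a strictly positive entry), and verifying that this survives a mutation at $k_i\notin J$ --- where the $B_J$ side does not move while the $B$ side does --- and a skipped $k_i\in J$ is exactly the content of Muller's argument. By writing that this verification ``is parallel to that in \cite{M}'' you defer the entire mathematical content of the lemma; that is no worse than what the paper itself does, but it means the proposal is not a proof, and the one ingredient it does state concretely (the invariant) is not stated in a form that is true.
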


\begin{Theorem}\label{cormgs}
Let $B$ be a skew-symmetrizable matrix.  Then $B$ has a maximal green sequence (respectively, green-to-red sequence) if and only if any irreducible principal submatrix of $B$ has a maximal green sequence (respectively, green- to-red sequence).
\end{Theorem}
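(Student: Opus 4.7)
The plan is a short induction on the size $n$ of $B$, bootstrapping Theorem \ref{thm2} into a reduction to irreducible pieces.

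The forward implication is essentially free: if $B$ admits a maximal green sequence (respectively, green-to-red sequence), then by Lemma \ref{subthm} so does every principal submatrix of $B$, and in particular every irreducible principal submatrix.

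For the converse I would argue by induction on $n$. The base case $n=1$ is trivial, since $B$ is its own only principal submatrix and is automatically irreducible (no partition of $\{1\}$ into two nonempty subsets exists in Definition \ref{defind}), so the hypothesis directly gives what we want. For the inductive step, if $B$ itself is irreducible then $B$ is an irreducible principal submatrix of itself and has a maximal green sequence by hypothesis. Otherwise Definition \ref{defind} lets me permute indices and write
\[
B = \begin{pmatrix} B_1 & B_3 \\ B_2 & B_4 \end{pmatrix}
\]
with $B_2 \in M_{n_2 \times n_1}(\mathbb Z_{\geq 0})$ and $n_1, n_2 \geq 1$, so that $B_1$ and $B_4$ are strictly smaller principal submatrices of $B$. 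Any irreducible principal submatrix of $B_1$ or of $B_4$ is again an irreducible principal submatrix of $B$, so the hypothesis of the theorem is inherited by $B_1$ and $B_4$, and the inductive hypothesis produces maximal green sequences for both. Concatenating them via Theorem \ref{thm2} yields a maximal green sequence of $B$. For the green-to-red version one runs the identical induction, using Remark \ref{rmkqin} in place of the $``\Longleftarrow"$ direction of Theorem \ref{thm2}.

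There is essentially no analytic obstacle beyond organizing the induction; the heavy lifting has already been done in Theorem \ref{thm2} (which rests on the uniform column sign-coherence of nonnegative matrices, Corollary \ref{cor1}) and in Lemma \ref{subthm}. The only mild subtlety worth flagging is the closure property used above, namely that an irreducible principal submatrix of a principal submatrix of $B$ is an irreducible principal submatrix of $B$; this is immediate from the definitions but is precisely what makes the induction go through.
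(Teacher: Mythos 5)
Your proposal is correct and is exactly the argument the paper intends: the paper's proof is the one-line citation of Lemma \ref{subthm} (for the forward direction), Theorem \ref{thm2} and Remark \ref{rmkqin} (for the converse), and your induction on the size of $B$, using the closure of irreducible principal submatrices under passage to the diagonal blocks $B_1$, $B_4$ of a reducible decomposition, is the natural way to organize those ingredients. Nothing is missing.
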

\begin{proof}
It follows from Lemma \ref{subthm}, Theorem \ref{thm2} and Remark \ref{rmkqin}.
\end{proof}

\begin{Remark}
By the above theorem, we can give our explanation of the existence of maximal green sequences for acyclic skew-symmetrizable matrices. Because any irreducible principal submatrix of an acyclic skew-symmetrizable matrix $B$ is only a $1\times 1$ zero matrix, and  it always has a maximal green sequence, we then know that by Theorem \ref{cormgs} any acyclic skew-symmetrizable matrix admits a maximal green sequence.
\end{Remark}

 By Theorem \ref{cormgs},  we  reduce the existence of maximal green sequences (respectively, green-to-red sequences) for skew-symmetrizable matrices to the existence of maximal green
sequences (respectively, green-to-red sequences) for irreducible skew-symmetrizable matrices $B$, i.e. those $B$
 whose all arrows of $\Gamma(B)$ are in oriented cycles, by  Proposition \ref{procycle}.
 In \cite{GM}, the authors classified the irreducible principal submatrices of the skew-symmetric matrices of type $A$
  and proved any such an irreducible matrix has a maximal green sequence. Therefore, the authors get that any  skew-symmetric matrix of type $A$ has a maximal green sequence. Inspired by this and Theorem \ref{thm2}, we propose the following problem as an attempt to end the discussion on the existence of maximal green sequences (respectively, green-to-red sequences).
\begin{Problem}
 When does an irreducible skew-symmetrizable matrix  admit a maximal green sequence (respectively, green-to-red sequence)?
\end{Problem}

{\bf Acknowledgements:}\; This project is supported by the National Natural Science Foundation of China (No.11671350 and No.11571173).

%\end{CJK*}

\end{document}